\newtheorem{theorem}{Theorem}
\newtheorem{corollary}[theorem]{Corollary}
\newtheorem{definition}[theorem]{Definition}
\newenvironment{proof}[1][Proof]{\textbf{#1.} }{\ \rule{0.5em}{0.5em}}
\begin{document}

\title{{\LARGE Inverting weak random operators.
\footnote{This research was partially supported by the CONACYT.}}}
\author{
Jonathan Gutierrez-Pav\'{o}n
\thanks{Departamento de Matematicas, CINVESTAV-IPN, A. Postal 14-740, Mexico D.F. 07000, MEXICO. Email: jjgutierrez@math.cinvestav.mx}
and Carlos G. Pacheco
\thanks{Email: cpacheco@math.cinvestav.mx}}

\maketitle

\begin{abstract}
We analyze two weak random operators, initially motivated from
processes in random environment. Intuitively speaking these
operators are ill-defined, but using bilinear forms one can deal
with them in a rigorous way. This point of view can be found for
instance in the work Skorohod \cite{Skorohod}, and it remarkably
helps to carry out specific calculations. In this paper, we find
explicitly the inverse of such weak operators, by provinding the
forms of the so-called Green kernel. We show how this approach helps
to analyze the spectra of the operators. In addition, we provide the
existence of strong operators associated to our bilinear forms.
Important tools that we use are the Sturm-Liouville theory and the
stochastic calculus.
\end{abstract}

{\bf 2000 Mathematics Subject Classification: 60K37, 60H25}
\\

\textbf{Keywords: Green kernel, weak random operators,
Sturm-Liouville theory, Geometric Brownian}.


\section{Introduction}\addcontentsline{toc}{chapter}{Introduction}

There are plenty of examples of probabilistic models where there is
an operator that resembles differential operator with coefficients
given in terms of the derivative of the Brownian motion. For
instance, the so-called stochastic heat equation or the random
Schr\"{o}dinger equations are well known cases studied in the
literature. In this paper we work with two examples of random
operators defined in a weak sense using bilinear forms. One of our
aims is to find the inverse, in a suitable sense, of such weak
random operators.

This kind of models are instances of the so-called Schr\"{o}dinger
operators with random potential. They have been important in
theoretical physics, in particular in the theory of disorder
systems, e.g. \cite{Papanicolaou}. The importance of these models is
well documented, see for instance \cite{Carmona}.

Let us mention two important examples. In \cite{Halperin} it is
consider the Schr\"{o}dinger operator with random potential
informally given by the expression
$$Lf(t)=-f^{\prime\prime}(t)+W^{\prime}(t)f(t),\ t\in [0,1],$$
where $W^{\prime}$ is white noise and it can be thought as the
derivative of the Brownian motion. One very first task is to give a
proper meaning of the operator $L$. As shown in \cite{Fuku} such
operator has a discrete spectrum given by a set of eigenvalues. It
turns out one can give expressions of the inverse operator, see
\cite{Pacheco}, which leads to spectral information.

In the context of random processes with random environment an
important model is the so-called Brox diffusion, see \cite{Brox},
amply studied in the literature. This process can be worked out as a
Markov process, and informally speaking the generator has the form
$$Lf(t)=\frac{1}{2}\left(-f^{\prime\prime}(t)+W^{\prime}(t)f^{\prime}(t)\right), \ t\in \mathbb{R}.$$
It turns out that one can analyze $L$ by finding its inverse, as
done in the companion paper \cite{GPP}, where a bounded version of
the Brox diffusion is studied. Moreover, there is a remarkable
similarity with an operator arising in the theory of random
matrices, see \cite{Ramirez}. Loosely speaking, such operator plays
the role of the infinite random matrix, and the spectrum helps to
charaterize the limiting eigenvalues of a random matrix

As it is traditionally thought, knowing spectral information of the
inverse helps to analyze the differential operator. As demonstrated
in \cite{GPP}, the inverse of $L$ helps to obtain spectral
information which eventually leads to information of the probability
density function. From a more theoretical point of view, one can see
that is possible to deal with the inverse in fairly friendly way,
without making use of machinary such as the theory of distributions.
This is so from well-known tools in the Sturm-Liouville theory and
the stochastic calculus.


In this paper the two operators that we consider are given
informally by the expressions:
$$(Lf)(t):=f''(t)-W(t)f'(t)-W'(t)f(t),$$
and
$$(Lf)(t):=\frac{f''(t)}{2}-\frac{W'(t)f'(t)}{2}.$$

In order to make sense of the term $W'$, we will define these
operators in a weak sense using the inner product. In that way we
can make sense of the term $\displaystyle\int_{a}^{b}W'(t)h(t)dt$ by
rewriting it as
\begin{equation} \label{cuadrado}
\displaystyle\int_{a}^{b}h(t)dW(t).
\end{equation}

After specifying the domains, our goal is to find the inverse of
these two operators defined in weak sense. This inverse operator is
called the Green operator. In the classical Sturm-Liouville theory,
to tackle this problem one should consider the solutions of the
homogeneous problem $Lf=0$. Here we will also consider the solutions
of the homogeneous equation but in a weak sense, again using the
inner product. It turns out that the homogeneous solutions are
explicit functions of the Brownian motions.

\vspace{0.2cm}

\noindent We start in the comming Section 2 with some preliminaries,
where we present the concept of a weak operator. In Section 2.1 we
also mention some ideas on strong operators associated to bilinear
forms. Then in Section 3.2 we deal with the first weak operator and
find explicitly the solutions of $Lf=0$. These solutions will help
to construct the green operator associated. In Section 3.1 we
mention how to find the strong operator associated to the weak
random operator. In a simular fashion, in Section 4 we work with the
second weak operator, and we also find explicit solutions of the
homogeneous equation using approximations of the Brownian motion.
\vspace{0.5cm}

\section{Preliminaries } \addcontentsline{toc}{chapter}{Introduction}

\vspace{0.2cm}

\noindent We will work with two weak random operators whose domain
are functions defined on an interval $[a,b]$. More precisely, the
domain is the set of functions $f \in L_{2}[a,b]$ absolutely
continuous that satisfies the Dirichlet conditions $f(a)=0=f(b)$.
Our first goal is to give the proper definitions of the operators
that we work using bilinear forms. Next we find solutions for the
homogeneous equation which eventually will lead to the inverse
operator.

\vspace{0.2cm}

\noindent The first operator that we consider has the following
formal expression:

$$(Lf)(t)=f''(t)-W(t)f'(t)-W'(t)f(t).$$

\vspace{0.2cm}

\noindent where $W:= \{W(t): t \in [a,b]  \}$ is a Brownian motion,
and $W'$ denotes its derivative, sometimes called the white noise.

\vspace{0.2cm}

\noindent The second operator that we consider can be expressed as
follows:

$$(Lf)(t)=\frac{f''(t)}{2}-\frac{W'(t)f'(t)}{2}.$$

\vspace{0.2cm}

\noindent A natural space to work with these operators is the
Hilbert space $L_{2}[a,b]$ with its inner product
$$\langle f,g \rangle=\int_{a}^{b}f(x)g(x)dx.$$
To define properly the domain of our operators we need to introduce
the following Sobolev space:
$$H_{1}:= \{h \in L_{2}[a,b] : h \mbox{ is absolutely continuous}, h(a)=h(b)=0   \}.$$
\vspace{0.2cm}

\noindent Note that $H_{1}$ is Hilbert space with the norm
\begin{equation} \label{norma sobolev}
\|f \|_{1}^{2}:= \displaystyle \int_{a}^{b}[f(x)]^{2}dx
+\int_{a}^{b}[f'(x)]^{2}dx,
\end{equation}
and the corresponding inner product.

\vspace{0.2cm}







The idea to define weak operators is to think of an operator $L$ by
describing its effect through the inner product, thus we will
propose a bilinear form. More specifically:
\begin{equation} \label{definicion del producto interno}
\langle Lf, h \rangle = \displaystyle\int_{a}^{b}Lf(t)h(t)dt, \; \;
\; \mbox{for all} \; \; f,h \in H_{1}.
\end{equation}
We take this point of view from the work of Anatolii Vladimirovich
Skorohod, see \cite{Skorohod}. \vspace{0.2cm}



\begin{definition}\label{WeakOperators}
Consider the mapping $\varepsilon (f,g)$ defined on a Hilbert space
with the following conditions:
\begin{enumerate}
    \item $\varepsilon ( \alpha_{1}f_{1}+\alpha_{2}f_{2},\beta_{1}g_{1}+\beta_{2}g_{2})
    = \displaystyle\sum_{i,j=1}^{2} \alpha_{i}\beta_{j}\varepsilon (f_{i},g_{j} )$,
    \item $\varepsilon ( f_{n},g_{n} )$ converges to
        $\varepsilon (f,g)$ in probability as $f_{n} \rightarrow f$ and $g_{n}\rightarrow g$.
\end{enumerate}
We say that $\varepsilon$ defines a weak random operator $L$,
through the expression $\langle Lf,g \rangle:=\varepsilon(f,g)$.
\end{definition}

On the other hand, as we mentioned in the Introduction, we need to
find the solutions of the homogeneous equation $Lf=0$. So, if $L$ is
a weak random operator we have the following definition of solving
$Lf=0$.
\begin{definition} \label{definicion solucion de la ecuacion homogenea}
We say that a stochastic process $\{ u(t): t \in [a,b] \}$ is a
solution of the equation $Lf=0$, if for all $h \in H_{1}$,
\begin{equation}
\langle Lu, h  \rangle= \varepsilon(u,h) =0 \; \; \mbox{almost
surely}.
\end{equation}
\end{definition}
It turns out that it is possible to find solutions of this problem
for the operators we consider.

\subsection{On strong operators}\label{definition closed}

In some cases it is possible to find an operator in strong sense
associated to the bilinear form. Generally speaking, such situation
occurs if $\varepsilon$ is what it is called a symmetric closed
lower semibounded bilinear form on a Hilbert space with inner
product $\langle \cdot , \cdot \rangle$. The reader can see
\cite{Selfadjoint} as a general reference, in particular Chapter 10.
The two examples that we will study are not symmetric, however the
two bilinear forms that we consider can be written as $\varepsilon=
\varepsilon_{1}+\varepsilon_{2}$, where $\varepsilon_{1}$ is
symmetric and $\varepsilon_{2}$ is coercive. We can use this
decomposition to find a strong operator associated to $\varepsilon$.

More precisely, on a linear subspace $D$ of the Hilbert space $H$
with norm $\|\cdot \|$, a symmetric bilinear form $\varepsilon_{1}$
is lower semibounded if there exists a constant $C$ such that
$\varepsilon_{1}(f,f)\geq C \|f \|^{2}$ for all $f \in D$. It is
also said that $\varepsilon_{1}$ is closed if $D$ is complete with
the norm
\begin{equation}\label{norma subt}
\| f\|_{\varepsilon_{1}}:= \left[ \varepsilon_{1}(f,f)+ (1-C)\|f
\|^{2} \right]^{\frac{1}{2}}.
\end{equation}
Then, we will be able to appeal to the Corollary 10.8 in
\cite{Selfadjoint} to show that $\varepsilon_{1}$ have associated a
self-adjoint operator, i.e. there exists an operator $L_{1}$ such
that $\varepsilon_{1}(f,g)= \langle L_{1}f,g \rangle$.

On the other hand, for the bilinear form $\varepsilon_{2}$ we will
use the Lax-Milgram theorem. To use this theorem we need to show
that $\varepsilon_{2}$ is bounded and coercive, i.e. if there are
two constants $C>0$ and $c> 0$ such that $|\varepsilon_{2}(f,f)|
\leq C \|f \|^{2}$ and $|\varepsilon_{2}(f,f)| \geq c \|f \|^{2}$,
respectively. If a bilinear form satisfies the previous properties
on the Hilbert space $H$ then there exists a operator $L_{2}$ such
that $\varepsilon_{2}(f,g)= \langle L_{2}f,g \rangle$, where
$\langle \cdot,\cdot \rangle$ is the inner product on $H$.

Then we obtain that
$$\varepsilon(f,g)=\varepsilon_{1}(f,g)+\varepsilon_{2}(f,g)= \langle L_{1}f,g \rangle+
\langle L_{2}f,g \rangle= \langle (L_{1}+L_{2})f,g \rangle.$$ The
previous equality shows that the bilinear form $\varepsilon$ has
associated the operator $L_{1}+L_{2}$.

Let us stress out that although it becomes feasible to give this
association of a strong operator, in this paper our main goal is to
study the weak random operator. This comes from the interest to
carry out calculations relying on the bilinear forms alone.


\section{With random potential and random coefficient} \addcontentsline{toc}{chapter}{Introduction}

\vspace{0.2cm}

\noindent In this section we consider the operator with the
following formal expression:
\begin{equation} \label{operador 2}
(Lf)(t)=f''(t)-W(t)f'(t)-W'(t)f(t).
\end{equation}

\vspace{0.2cm}

\noindent We can consider (\ref{operador 2}) in the following weak
sense, using the inner product
\begin{equation} \label{definicion debil 2}
\langle Lf,h  \rangle:= \displaystyle \int_{a}^{b} f''(t)h(t)dt -
\int_{a}^{b} f'(t)W(t)h(t)dt-\int_{a}^{b}f(t)h(t)dW(t).
\end{equation}

\vspace{0.2cm}

\noindent Now, we use integration by parts in the first term of
(\ref{definicion debil 2}) and the It$\hat{\mbox{o}}$'s formula in
the third term of (\ref{definicion debil 2}) to obtain the following
definition.

\begin{definition} \label{def 2}
For any pair $f,h \in H_{1}$, we define the bilinear form
$\varepsilon$ as
\begin{equation} \label{definicion debil 2 esta es la definitiva}
\varepsilon( f, h  ):= -\displaystyle \int_{a}^{b} f'(t)h'(t)dt +
\int_{a}^{b}f(t)h'(t)W(t)dt.
\end{equation}
\end{definition}

\noindent As we mentioned in the previous section, we consider this
bilinear form as a weak random operator $L$ through the expression
$\langle Lf,g \rangle:= \varepsilon(f,g)$. We do not go into
details, but it is possible to show that $L$, i.e. $\varepsilon$,
fits into Definition \ref{WeakOperators}.

Before we study the inverse operator of $L$, let us mention how we
can find an operator associated to the bilinear.

\subsection{A strong operator}\label{SectSO}

In order to find a strong operator associated to the bilinear form
$\varepsilon$ we carry on the following decomposition.

Notice that $\varepsilon= \varepsilon_{1}+\varepsilon_{2}$ where
$$\varepsilon_{1}(f,h):=
\displaystyle-\frac{1}{2}\int_{a}^{b}f'(t)h'(t)dt+\frac{1}{2}
\int_{a}^{b}f(t)h'(t)W(t)dt+\frac{1}{2}\int_{a}^{b}f'(t)h(t)W(t)dt,$$
and
$$\varepsilon_{2}(f,h):= \displaystyle-\frac{1}{2}\int_{a}^{b}f'(t)h'(t)dt +
 \frac{1}{2}\int_{a}^{b}f(t)h'(t)W(t)dt
-\frac{1}{2}\int_{a}^{b}f'(t)h(t)W(t)dt.$$

One can see that $\varepsilon_{1}$ is symmetric form on $H_{1}$ but
$\varepsilon_{2}$ is not symmetric on $H_{1}$.

Let us see that $\varepsilon_{1}$ is lower semibounded and closed
bilinear form. Let $M:=\max_{a\leq s \leq b} |W(s)|.$ Using $\mid ab
\mid \leq \displaystyle\frac{a^{2}+b^{2}}{2}$, then we have

\begin{eqnarray*}
  \varepsilon_{1}(f,f) &=&
  \displaystyle-\frac{1}{2}\int_{a}^{b}[f'(t)]^{2}dt+
\int_{a}^{b}f(t)f'(t)W(t)dt \\
   &\geq& \displaystyle-\frac{1}{2}\int_{a}^{b}[f'(t)]^{2}dt-M
\int_{a}^{b}\mid f(t)f'(t)\mid dt \\
   &\geq &
   \displaystyle-\frac{1}{2}\int_{a}^{b}[f'(t)]^{2}dt-\frac{M}{2}
\int_{a}^{b} [f(t)]^{2} dt -\frac{M}{2}
\int_{a}^{b} [f'(t)]^{2} dt \\
   &\geq& C \|f \|_{1},
\end{eqnarray*}
where $C$ is a constant that depends on $W$, and $\|\cdot \|_{1}$ is
defined in (\ref{norma sobolev}).

\noindent Then $\varepsilon_{1}$ is a semibounded form on $H_{1}$.
Let us now see that $\varepsilon_{1}$ is closed, this happens if the
Sobolev space $H_{1}$ is complete with the norm $\| \cdot
\|_{\varepsilon_{1}}$. Indeed, this is the case because $\| \cdot
\|_{\varepsilon_{1}}$ is equivalent to the norm $\| \cdot \|_{1}$ of
$H_{1}$. This implies that $\varepsilon_{1}$ is a closed form on
$H_{1}$. Therefore, using the Corollary 10.8 from
\cite{Selfadjoint}, there exists an operator $L_{1}$ in strong sense
with domain $H_{1}$ associated with the bilinear form
$\varepsilon_{1}$, i.e. $\varepsilon_{1} (f,g)= \langle L_{1}f,
g\rangle_{1}$, where $\langle \cdot , \cdot \rangle_{1}$ is the
inner product associated with the norm $\| \cdot \|_{1}$.
\vspace{0.2cm}

\noindent On the other hand, we use the Lax-Milgram theorem to show
that there exists an operator $L_{2}$ such that
$\varepsilon_{2}(f,g)= \langle L_{2}f,g\rangle_{1}$. To do that, we
show that $\varepsilon_{2}$ is bounded and coercive. We have
\begin{eqnarray*}
 |\varepsilon_{2}(f,f)| &=&
 \displaystyle\frac{1}{2}\int_{a}^{b}[f'(t)]^{2}dt \leq C \|f\|^{2}_{1}.
\end{eqnarray*}
The previous inequality shows that $\varepsilon_{2}$ is bounded. Let
us see why it is coercive. To do that, we use the Poincar\'e
inequality: $\|f \|\leq K \|f' \|$ for some constant $K >0$ and for
all $f\in H_{1}$. Then
\begin{eqnarray*}
 |\varepsilon_{2}(f,f)| &=&
 \displaystyle\frac{1}{2}\int_{a}^{b}[f'(t)]^{2}dt  \\
   &=& \frac{1}{4} \|f'\|^{2}+ \frac{1}{4} \|f'\|^{2} \\
   &\geq& \frac{1}{4} \|f'\|^{2} + \frac{1}{4 K} \|f\|^{2} \\
   &\geq& c \|f \|_{1}^{2}.
\end{eqnarray*}
Thus, $\varepsilon_{2}$ is coercive. Therefore, using the
Lax-Milgram theorem there exists an operator $L_{2}$ such that
$\varepsilon_{2}(f,g)= \langle L_{2}f,g \rangle_{1}$.

\noindent Then our bilinear form $\varepsilon$ is associated with
the operator $L_{1} + L_{2}$ with respect to the inner product $
\langle \cdot,\cdot \rangle_{1}$.

\subsection{The Green operator}
\noindent Now, we want to construct the Green operator associated to
the weak random operator $L$ from the Definition (\ref{definicion
debil 2 esta es la definitiva}). To this end, we need to find two
solutions linearly independent of the homogeneous equation.
Intuitively we have
$$f''(t)-W(t)f'(t)-W'(t)f(t)=0.$$

\vspace{0.2cm}

\noindent This equation can be rewritten as
$$f''(t)=[W(t)f(t)]'.$$

\vspace{0.2cm}

\noindent Moreover, integrating both side we arrive at
$$f'(t)=W(t)f(t)+ C, \; \; \; \mbox{where C is a constant}.$$

\vspace{0.2cm}

\noindent This equation is easy to solve, and we exhibit the
solutions in the following theorem. However, we rigourously verify
that the solutions satisfies the equation $Lf=0$.

\begin{theorem}
Two linearly independent solutions of the problem $Lf=0$ are the
following
\begin{equation} \label{definicion de u 2}
u(t):=\displaystyle\frac{e^{\int_{a}^{t}W(s)ds}
\displaystyle\int_{a}^{t}e^{-\int_{a}^{s}W(r)dr}ds}
{e^{\int_{a}^{b}W(s)ds}\displaystyle\int_{a}^{b}e^{-\int_{a}^{s}W(r)dr}ds},
\end{equation}

\begin{equation} \label{definicion de v 2}
v(t):=\displaystyle\frac{e^{\int_{a}^{t}W(s)ds}
\displaystyle\int_{t}^{b}e^{-\int_{a}^{s}W(r)dr}ds}
{e^{\int_{a}^{b}W(s)ds}\displaystyle\int_{a}^{b}e^{-\int_{a}^{s}W(r)dr}ds}.
\end{equation}
\vspace{0.2cm}

\noindent Furthermore, they satisfy $u(a)=0$, $u(b)=1$, $v(a)=1$ and
$v(b)=0$.

\end{theorem}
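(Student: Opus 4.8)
The plan is to verify the two claims in turn: that each of $u$ and $v$ solves $Lf=0$ in the sense of Definition~\ref{definicion solucion de la ecuacion homogenea}, and that the stated boundary values and linear independence hold. Abbreviate by $D:=e^{\int_{a}^{b}W(s)ds}\int_{a}^{b}e^{-\int_{a}^{s}W(r)dr}\,ds$ the common denominator appearing in (\ref{definicion de u 2})--(\ref{definicion de v 2}); almost surely $D$ is a finite, strictly positive number. The heart of the argument will be the pair of pathwise first-order identities
\[
u'(t)=W(t)u(t)+\frac{1}{D},\qquad v'(t)=W(t)v(t)-\frac{1}{D},\qquad t\in[a,b],
\]
valid on the almost sure event that $W$ has continuous sample paths. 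Granting these, for every $h\in H_{1}$ the definition (\ref{definicion debil 2 esta es la definitiva}) of $\varepsilon$ gives
\[
\varepsilon(u,h)=\int_{a}^{b}\bigl(W(t)u(t)-u'(t)\bigr)h'(t)\,dt=-\frac{1}{D}\int_{a}^{b}h'(t)\,dt=-\frac{1}{D}\bigl(h(b)-h(a)\bigr)=0,
\]
using the Dirichlet conditions $h(a)=h(b)=0$, and the computation for $v$ is identical with $1/D$ replaced by $-1/D$. Since the null set is the same for every $h$, this is exactly what Definition~\ref{definicion solucion de la ecuacion homogenea} requires; it also justifies rigorously the informal reduction $f'=Wf+C$ mentioned just before the statement.

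Proving the identities for $u'$ and $v'$ is the one place where a (short) computation is needed, and is what I would regard as the main step. On the event that $s\mapsto W(s)$ is continuous, $t\mapsto\int_{a}^{t}W(s)\,ds$ is $C^{1}$ with derivative $W(t)$; hence $t\mapsto e^{\pm\int_{a}^{t}W(s)ds}$ is $C^{1}$, and $t\mapsto\int_{a}^{t}e^{-\int_{a}^{s}W(r)dr}\,ds$ and $t\mapsto\int_{t}^{b}e^{-\int_{a}^{s}W(r)dr}\,ds$ are $C^{1}$ with derivatives $e^{-\int_{a}^{t}W(r)dr}$ and $-e^{-\int_{a}^{t}W(r)dr}$ respectively. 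Since $D$ is a nonzero constant, $u$ and $v$ are therefore $C^{1}$ on $[a,b]$, in particular absolutely continuous, so that $\varepsilon(u,h)$ and $\varepsilon(v,h)$ are well defined even though $u,v\notin H_{1}$ (indeed $u(b)=1\neq 0$). Differentiating the product in the numerator of $u$, the term in which the exponential is differentiated reproduces $W(t)$ times the original numerator, i.e.\ $W(t)u(t)$, while the term in which the inner integral is differentiated produces $e^{\int_{a}^{t}W(s)ds}\cdot e^{-\int_{a}^{t}W(s)ds}/D=1/D$ because the exponentials cancel; for $v$ the same bookkeeping yields $W(t)v(t)-1/D$. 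This establishes the two identities used above.

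Finally, substituting $t=a$ and $t=b$ into (\ref{definicion de u 2}) and (\ref{definicion de v 2}) and simplifying yields the four boundary values: $u(a)=0$ and $v(b)=0$ because the relevant inner integral collapses at the matching endpoint, $u(b)=1$ because the numerator of $u$ at $t=b$ is exactly $D$, and the value of $v$ at $a$ is read off in the same way. Linear independence then follows immediately: since $u(a)=0$ while $v(a)\neq 0$, and $u(b)\neq 0$ while $v(b)=0$, any relation $\alpha u+\beta v\equiv 0$ on $[a,b]$ forces first $\beta=0$ (evaluate at $t=a$) and then $\alpha=0$ (evaluate at $t=b$); equivalently, the Wronskian $uv'-u'v=-(u+v)/D$ equals $-v(a)/D\neq 0$ at $t=a$. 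The only real subtleties in the whole argument are the two already flagged — one works pathwise on $\{W\ \text{continuous}\}$, and one must check that $\varepsilon$ remains meaningful on functions lying outside $H_{1}$; no stochastic calculus is needed at this stage, the white-noise term having been absorbed by the integration by parts built into (\ref{definicion debil 2 esta es la definitiva}).
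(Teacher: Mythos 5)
Your proof is correct and follows essentially the same route as the paper's: compute $u'(t)=W(t)u(t)+1/D$ (this is exactly the paper's displayed formula for $u'$), substitute into $\varepsilon(u,h)=-\int_a^b u'h'\,dt+\int_a^b uh'W\,dt$, and let the Dirichlet conditions $h(a)=h(b)=0$ kill the leftover term $-\tfrac{1}{D}\int_a^b h'(t)\,dt$; you merely spell out the pathwise differentiability, the substitution, and the boundary-value/linear-independence checks that the paper leaves implicit. One caveat: when you say all four boundary values are ``read off,'' note that substituting $t=a$ into $v$ actually gives $v(a)=e^{-\int_a^b W(s)\,ds}$, not $1$, so the claim $v(a)=1$ holds only up to a renormalization of $v$ (the paper's proof never checks this either); this does not damage your argument, since linear independence and the Green-operator construction only use $u(a)=0$, $v(b)=0$ and $v(a)\neq 0$.
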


\begin{proof}
\noindent Let us verify that $u$ is solution. For $v$ is similar.
According to the Definition \ref{def 2} we need to show that
$\langle Lu, h \rangle =0$ for all $h \in H_{1}$, i.e.
\begin{equation} \label{gato}
-\displaystyle\int_{a}^{b}u'(t)h'(t)dt+\int_{a}^{b}u(t)h'(t)W(t)dt=0.
\end{equation}

\vspace{0.2cm}

\noindent Note that
\begin{equation} \label{derivada de u 2}
u'(t)= \displaystyle\frac{W(t)e^{\int_{a}^{t}W(s)ds}
\displaystyle\int_{a}^{t}e^{-\int_{a}^{s}W(r)dr}ds +1}
{e^{\int_{a}^{b}W(s)ds}\displaystyle\int_{a}^{b}e^{-\int_{a}^{s}W(r)dr}ds}.
\end{equation}

\vspace{0.2cm}

\noindent Substituting (\ref{derivada de u 2}) and the definition of
$u$ in (\ref{gato}), we end up with $\langle Lu, h  \rangle = 0$.
\end{proof}

\noindent Using previous two solutions, we construct the Green
operator. The following theorem shows the construction.

\begin{theorem}
Let $u,v$ two solution of $Lf=0$, such that $u(a)=0$ and $u(b)=1$
always, and $v(a)=1$ and $v(b)=0$ always. The stochastic Green
operator associated to L is given by

\begin{equation} \label{operador de green 2}
(Tf)(t):=\displaystyle \int_{a}^{b}G(t,s)f(s)ds,
\end{equation}

\vspace{0.4cm}

\noindent where
\begin{center}
\noindent $G(t,s):=   \left\{%
\begin{array}{ll}
    \displaystyle\frac{u(t)v(s)}{\alpha(s)}, & \hbox{a $\leq$ s $\leq$ t $\leq$ b;} \\
    \displaystyle\frac{u(s)v(t)}{\alpha(s)}, & \hbox{a $\leq$ t $\leq$ s $\leq$ b.} \\
\end{array}%
\right.     $,
\end{center}

\vspace{0.2cm}

\noindent and $$\alpha(t):= u'(t)v(t)-v'(t)u(t).$$

\vspace{0.2cm}

\noindent This operator $T$ is the right inverse of the operator $L$
in the sense that for all $h \in H_{1}$
$$\varepsilon ( Tf,h )= \langle LTf, h \rangle= \langle f,
 h \rangle\; \; \; \mbox{almost surely}.$$
\end{theorem}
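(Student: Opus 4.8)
The plan is to verify directly that $T$ is a right inverse by computing $\varepsilon(Tf,h)$ for an arbitrary $h\in H_1$ and showing it equals $\langle f,h\rangle$. First I would set $g:=Tf$, so that $g(t)=\int_a^b G(t,s)f(s)\,ds$, and split the inner integral at $s=t$ using the definition of the kernel, obtaining
\begin{equation}\label{gdecomp}
g(t)=\frac{v(t)}{1}\int_a^t \frac{u(s)}{\alpha(s)}f(s)\,ds + u(t)\int_t^b \frac{v(s)}{\alpha(s)}f(s)\,ds,
\end{equation}
where I have written $v(t)$ and $u(t)$ outside since they do not depend on the integration variable $s$. Differentiating \eqref{gdecomp} in $t$, the two boundary terms coming from the variable limits cancel (this is the usual Sturm--Liouville miracle, using that the integrand picks up $u(t)v(t)/\alpha(t)$ from one piece and $-u(t)v(t)/\alpha(t)$ from the other), leaving
\begin{equation}\label{gprime}
g'(t)=v'(t)\int_a^t \frac{u(s)}{\alpha(s)}f(s)\,ds + u'(t)\int_t^b \frac{v(s)}{\alpha(s)}f(s)\,ds.
\end{equation}

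Next I would substitute $g=Tf$ and $g'$ from \eqref{gprime} into the bilinear form from Definition \ref{def 2},
\[
\varepsilon(Tf,h)=-\int_a^b g'(t)h'(t)\,dt+\int_a^b g(t)h'(t)W(t)\,dt=\int_a^b\bigl(-g'(t)+W(t)g(t)\bigr)h'(t)\,dt.
\]
The key algebraic step is to combine $-g'(t)+W(t)g(t)$ using \eqref{gdecomp} and \eqref{gprime}: the coefficient of $\int_a^t (u/\alpha)f$ becomes $-v'(t)+W(t)v(t)$ and the coefficient of $\int_t^b (v/\alpha)f$ becomes $-u'(t)+W(t)u(t)$. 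Here I would invoke the homogeneous equation in the integrated form derived just before the first theorem, namely that any solution of $Lf=0$ satisfies $f'(t)=W(t)f(t)+C_f$ for a constant $C_f$; from the explicit formulas \eqref{definicion de u 2}, \eqref{definicion de v 2} one reads off $u'(t)-W(t)u(t)=1/D$ and $v'(t)-W(t)v(t)=-1/D$, where $D:=e^{\int_a^b W}\!\int_a^b e^{-\int_a^s W(r)dr}ds$ is the common denominator. So $-g'(t)+W(t)g(t)= -\tfrac1D\int_a^t (u/\alpha)f\,ds \;-\;(-\tfrac1D)\int_t^b (v/\alpha)f\,ds$ — wait, I must track signs carefully here — and the point is that this expression is, up to the sign conventions, proportional to $\int_a^b G(t,s)f(s)\,ds$ differentiated, but more usefully one checks it telescopes so that $\varepsilon(Tf,h)=\int_a^b\bigl(\int_a^b\partial_t^{\,-}(\cdots)\bigr)h'$ reduces, after one integration by parts in $t$ and using $h(a)=h(b)=0$, to $\int_a^b f(t)h(t)\,dt=\langle f,h\rangle$.

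To make that last reduction precise I would swap the order of integration (Fubini, justified since all integrands are continuous in $(s,t)$ on the compact square) and use the Wronskian-type identity $\alpha(t)=u'(t)v(t)-v'(t)u(t)$, together with the relations $u'-Wu=1/D$, $v'-Wv=-1/D$, to show $\alpha(t)\equiv -1/D$ is constant; this constancy is what makes $G$ symmetric and makes the boundary terms match. I expect the main obstacle to be bookkeeping: keeping the signs in the two branches of $G$ straight through two differentiations and an integration by parts, and correctly handling the fact that $\varepsilon$ is not symmetric so the "right inverse" computation $\varepsilon(Tf,h)=\langle f,h\rangle$ is genuinely one-sided and must be done in the order written. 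Once the Wronskian $\alpha$ is shown constant and the homogeneous relations $u'-Wu=\text{const}$, $v'-Wv=\text{const}$ are in hand, the rest is a deterministic calculation valid path-by-path, so no stochastic-calculus subtleties enter beyond the pathwise continuity of $W$.
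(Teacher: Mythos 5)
Your overall strategy (differentiate $Tf$, cancel the boundary terms, use the integrated homogeneous relations $u'-Wu=\mathrm{const}$, $v'-Wv=\mathrm{const}$, then one integration by parts in $t$ using $h(a)=h(b)=0$) is viable and genuinely different from the paper's proof, which instead adds and subtracts three cross terms so as to reconstitute $\varepsilon(u,\cdot)=0$ and $\varepsilon(v,\cdot)=0$ applied to the test functions $h(t)\int_a^t \frac{v(s)f(s)}{\alpha(s)}ds$ and $h(t)\int_t^b \frac{u(s)f(s)}{\alpha(s)}ds$, and then cancels $\alpha(t)$ pointwise via $u'v-v'u=\alpha$. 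However, as written your argument contains two genuine errors. First, your decomposition of $Tf$ transposes the kernel: the theorem has $G(t,s)=u(t)v(s)/\alpha(s)$ for $s\le t$, so
$$(Tf)(t)=u(t)\int_a^t\frac{v(s)f(s)}{\alpha(s)}ds+v(t)\int_t^b\frac{u(s)f(s)}{\alpha(s)}ds,$$
whereas you put $v(t)$ on the first integral and $u(t)$ on the second. This is not a harmless relabeling: since $\varepsilon$ is not symmetric and $G$ is not a symmetric kernel, carrying your own computation through with the swapped decomposition gives $-(u'-Wu)\int_t^b\frac{vf}{\alpha}-(v'-Wv)\int_a^t\frac{uf}{\alpha}$ for $-g'+Wg$, whose $t$-derivative is $+\frac{(u+v)f}{D\alpha}=+f$, and the integration by parts then yields $\varepsilon(Tf,h)=-\langle f,h\rangle$, the wrong sign. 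Second, the claim that $\alpha(t)\equiv -1/D$ is constant (and that this makes $G$ symmetric) is false for this operator: from $u'-Wu=1/D$ and $v'-Wv=-1/D$ one gets $\alpha=u'v-v'u=\frac{u+v}{D}$, and with the explicit solutions (\ref{definicion de u 2}), (\ref{definicion de v 2}) this equals $\frac{1}{D}e^{-\int_t^b W(s)ds}$, which is positive and non-constant (heuristically $\alpha'=W\alpha$, not $\alpha'=0$, because the operator is not in self-adjoint Sturm--Liouville form).

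Both defects are repairable within your scheme: use the correct (untransposed) decomposition, note that the cancellation of the two boundary terms in $(Tf)'$ needs no constancy of $\alpha$ (both terms are $\pm u(t)v(t)f(t)/\alpha(t)$ evaluated at the same point), and at the final step invoke only the pointwise identity $u'(t)v(t)-v'(t)u(t)=\alpha(t)$, equivalently $(u'-Wu)v-(v'-Wv)u=\frac{u+v}{D}=\alpha$, rather than constancy of $\alpha$. With those corrections your route gives $\varepsilon(Tf,h)=\langle f,h\rangle$ pathwise, in agreement with the theorem; but as submitted, the sign error forced by the transposed kernel and the false Wronskian-constancy claim are real gaps, not mere bookkeeping.
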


\vspace{0.2cm}

\begin{proof}
We want to proof that $\langle L(Tf),h  \rangle = \langle f, h
\rangle $. First note that
\begin{equation}
(Tf)(t)= u(t) \displaystyle \int_{a}^{t}\frac{v(s)f(s)}{\alpha(s)}ds
+  v(t)\int_{t}^{b}\frac{u(s)f(s)}{\alpha(s)}ds.
\end{equation}

\vspace{0.2cm}

\noindent Calculating the derivative of $(Tf)$ and simplifying yield
\begin{equation} \label{1}
\frac{d [(Tf)(t)]}{dt}= u'(t) \displaystyle
\int_{a}^{t}\frac{v(s)f(s)}{\alpha(s)}ds +
v'(t)\int_{t}^{b}\frac{u(s)f(s)}{\alpha(s)}ds.
\end{equation}
\vspace{0.2cm}

\noindent From the Definition \ref{definicion debil 2 esta es la
definitiva} we have
\begin{equation} \label{2}
\langle L(Tf), h  \rangle = -\displaystyle \int_{a}^{b}
(Tf)'(t)h'(t)dt + \int_{a}^{b}(Tf)(t)h'(t)W(t)dt.
\end{equation}

\vspace{0.2cm}

\noindent After plugging (\ref{1}) into (\ref{2}) one arrives at

\begin{eqnarray} \label{ecuacion grande 2}
\langle L(Tf),h  \rangle  &=& \nonumber
-\displaystyle\int_{a}^{b}u'(t)
 \left[\int_{a}^{t}\frac{v(s)f(s)}{\alpha(s)}ds
\right] h'(t)dt + \displaystyle\int_{a}^{b}u(t)
\left[\int_{a}^{t}\frac{v(s)f(s)}{\alpha(s)}ds \right] h'(t)W(t)dt \\
   &-&\displaystyle\int_{a}^{b}v'(t)
\left[\int_{t}^{b}\frac{u(s)f(s)}{\alpha(s)}ds \right] h'(t)dt+
\displaystyle\int_{a}^{b}v(t)
\left[\int_{t}^{b}\frac{u(s)f(s)}{\alpha(s)}ds \right] h'(t)W(t)dt.
\end{eqnarray}

\vspace{0.2cm}

\noindent Now, we add and subtract in (\ref{ecuacion grande 2}) the
following three terms:
\begin{equation}
\displaystyle \int_{a}^{b} \frac{u'(t)v(t)f(t)h(t)}{\alpha(t)}dt,
\end{equation}

\begin{equation}
\int_{a}^{b} \frac{u(t)v'(t)f(t)h(t)}{\alpha(t)}dt,
\end{equation}

\begin{equation}
\int_{a}^{b} \frac{u(t)v(t)f(t)h(t)W(t)}{\alpha(t)}dt.
\end{equation}

\vspace{0.2cm}

\noindent Hence, after calculations,
\begin{eqnarray} \label{ecuacion 2 donde usamos que Lu=0 y Lv=0}
   \langle L(Tf),h  \rangle  \nonumber &=&
  -\displaystyle\int_{a}^{b}u'(t) \left[h(t)\int_{a}^{t}\frac{v(s)f(s)}{\alpha(s)}ds
\right]' dt + \displaystyle\int_{a}^{b}u(t)
\left[h(t)\int_{a}^{t}\frac{v(s)f(s)}{\alpha(s)}ds \right]' W(t)dt\\
   &-& \displaystyle\int_{a}^{b}v'(t) \left[h(t)\int_{t}^{b} \frac{u(s)f(s)}{\alpha(s)}ds
\right]' dt+\displaystyle\int_{a}^{b}v(t)
\left[h(t)\int_{t}^{b} \frac{u(s)f(s)}{\alpha(s)}ds \right]' W(t)dt\\
   &+&
   \int_{a}^{b} \frac{u'(t)v(t)f(t)h(t)}{\alpha(t)}dt -
   \int_{a}^{b} \frac{v'(t)u(t)f(t)h(t)}{\alpha(t)}dt.
   \nonumber
\end{eqnarray}

\vspace{0.2cm}

\noindent Using the fact that $Lu=0$ and $Lv=0$, we obtain the
result.
\end{proof}

One can see that almost surely $T$ is a compact operator, thus it
has a discrete spectrum. It means that the relation $Te=\lambda e$
holds for some eigenvalue $\lambda$ and eigenfunction $e$. After
taking $\langle LTe,h  \rangle$ we arrive to the equation
$$\langle Le,h \rangle= \langle e/\lambda, h\rangle.$$
Therefore
\begin{corollary}
Almost surely, the weak operator $L$ has a discrete spectrum in the
sense that for all $h\in H_{1}$, the relation
$$\langle Le,h \rangle= \langle \lambda e, h\rangle$$
holds for a contable number of $\lambda$ and $e\in H_{1}$.
\end{corollary}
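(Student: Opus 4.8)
The plan is to derive the corollary as a direct consequence of the two preceding theorems, which together establish that $T$ is the right inverse of the weak operator $L$, plus the classical spectral theorem for compact self-adjoint (or, here, compact) operators on a Hilbert space. First I would verify that $T$ is a Hilbert–Schmidt, hence compact, operator on $L_2[a,b]$ almost surely: this amounts to checking that the kernel $G(t,s)$ is square-integrable on $[a,b]^2$, i.e. $\int_a^b\int_a^b G(t,s)^2\,ds\,dt<\infty$ with probability one. Since $u$ and $v$ are continuous functions of the Brownian path on the compact interval $[a,b]$ and $\alpha(t)=u'(t)v(t)-v'(t)u(t)$ is the Wronskian of two linearly independent solutions (so $\alpha$ does not vanish — in fact one can compute $\alpha$ explicitly from \eqref{definicion de u 2}–\eqref{definicion de v 2} and see it is a nonvanishing continuous process), the kernel $G$ is bounded on $[a,b]^2$ for almost every $\omega$, and boundedness on a bounded domain gives square-integrability. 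Hence $T$ is compact a.s.

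Next I would invoke the spectral theorem: a compact operator on the separable Hilbert space $L_2[a,b]$ has a spectrum consisting of at most countably many eigenvalues, whose only possible accumulation point is $0$, each nonzero eigenvalue having finite-dimensional eigenspace. Thus almost surely there is a countable family of pairs $(\lambda_n, e_n)$ with $Te_n=\lambda_n e_n$, $\lambda_n\neq 0$, and $e_n\in L_2[a,b]$. I would then need to argue that each eigenfunction $e_n$ lies in $H_1$: since $e_n=\lambda_n^{-1}Te_n=\lambda_n^{-1}\int_a^b G(\cdot,s)e_n(s)\,ds$, and the representation $(Te_n)(t)=u(t)\int_a^t \tfrac{v(s)e_n(s)}{\alpha(s)}ds+v(t)\int_t^b\tfrac{u(s)e_n(s)}{\alpha(s)}ds$ exhibits $Te_n$ as a sum of products of absolutely continuous functions ($u$ and $v$ are absolutely continuous, and the variable-limit integrals are absolutely continuous since the integrand is in $L_1$), we get $e_n$ absolutely continuous; the boundary behavior $u(a)=0=v(b)$ forces $(Te_n)(a)=(Te_n)(b)=0$, so $e_n(a)=e_n(b)=0$ and indeed $e_n\in H_1$.

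Finally I would translate the eigenrelation into the weak statement. Applying the second theorem with $f=e_n$, and using $Te_n=\lambda_n e_n$, we have for every $h\in H_1$
\begin{equation}
\langle LTe_n, h\rangle=\langle e_n,h\rangle \quad\text{a.s.}
\end{equation}
Since $Te_n=\lambda_n e_n$ and $\varepsilon$ is bilinear, the left side equals $\langle L(\lambda_n e_n),h\rangle=\lambda_n\varepsilon(e_n,h)=\lambda_n\langle Le_n,h\rangle$, so $\langle Le_n,h\rangle=\langle \lambda_n^{-1}e_n,h\rangle=\langle \mu_n e_n,h\rangle$ with $\mu_n:=\lambda_n^{-1}$. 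Relabeling $\mu_n$ as $\lambda$ in the statement, this is exactly the claimed discrete-spectrum property, with the countability inherited from the countability of the eigenvalues of the compact operator $T$.

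The main obstacle I anticipate is not any single analytic estimate but rather the care needed in the measurability/almost-sure bookkeeping: one must ensure that "$T$ is compact" holds on a single event of full probability (not a different null set for each potential test function), that the pathwise Wronskian $\alpha$ is nonvanishing a.s. so that $G$ is genuinely well-defined, and that the spectral theorem is being applied pathwise on the fixed Hilbert space $L_2[a,b]$ while the eigenfunctions are simultaneously shown to land in the smaller space $H_1$ where the bilinear form $\varepsilon$ and the relation $\langle Le,h\rangle$ make sense. The regularity bootstrap $e_n\in L_2 \Rightarrow e_n=\lambda_n^{-1}Te_n\in H_1$ is routine but should be stated, since the corollary explicitly asserts $e\in H_1$.
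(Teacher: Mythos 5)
Your proposal follows essentially the same route as the paper: the paper's own argument is precisely the observation that $T$ is almost surely compact, hence has at most countably many eigenpairs $Te=\lambda e$, and then combines this with the right-inverse identity $\langle LTe,h\rangle=\langle e,h\rangle$ to get $\langle Le,h\rangle=\langle e/\lambda,h\rangle$. You simply supply details the paper leaves implicit (the Hilbert--Schmidt bound on the kernel $G$, the bootstrap $e=\lambda^{-1}Te\in H_{1}$, and the almost-sure bookkeeping), so your argument is a fleshed-out version of the paper's proof rather than a different one.
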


\section{With random potential} \addcontentsline{toc}{chapter}{Introduction}

\vspace{0.3cm}

\noindent Informally speaking, we consider the following stochastic
operator
\begin{equation} \label{ecuacion de brox informal}
(Lf)(t)=\frac{f''(t)}{2}-\frac{W'(t)f'(t)}{2}.
\end{equation}

\vspace{0.2cm}

\noindent Taking into account equation (\ref{cuadrado}) and
(\ref{definicion del producto interno}), we define (\ref{ecuacion de
brox informal}) in the following weak sense

\begin{equation} \label{definicion debil 1}
\langle Lf, h  \rangle := \displaystyle
\int_{a}^{b}\frac{f''(t)h(t)}{2}dt -
\int_{a}^{b}\frac{f'(t)h(t)}{2}dW(t).
\end{equation}

\vspace{0.2cm}

\noindent We go an step further and instead of (\ref{definicion
debil 1}), we use integration by parts to obtain the following
definition.

\begin{definition}\label{definicion debil de un operador}
For any pair $f,h \in H_{1}$, we define the bilinear form
$\varepsilon$ as
\begin{equation} \label{definicion debil del operador sobre W22}
 \varepsilon(f, h ) := -\displaystyle \int_{a}^{b}\frac{f'(t)h'(t)}{2}dt -
\int_{a}^{b}\frac{f'(t)h(t)}{2}dW(t),
\end{equation}
and $L$ through $\langle Lf,g\rangle =\varepsilon (f,g)$.
\end{definition}
As we mentioned in previous section, one can check that $L$
satisfies the properties in Definition \ref{WeakOperators}.

\vspace{0.2cm}

\noindent
\subsection{A strong operator}
To talk about the strong operator associated to $\varepsilon$, in
this case
we consider the Sobolev space
$$W^{2,2}:= \{h \in L_{2}[a,b]: h',h'' \in L_{2}[a,b], h(a)=h(b)=0 \},$$
with the norm
\begin{equation} \label{normasobolev2}
\|f \|^{2}_{2}:= \displaystyle \int_{a}^{b}[f(x)]^{2}dx
+\int_{a}^{b}[f'(x)]^{2}dx + \int_{a}^{b}[f''(x)]^{2}dx.
\end{equation}
We want to prove the existence of an associated operator. Indeed,
using the It$\hat{\mbox{o}}$'s formula, we obtain for $f \in
W^{2,2}$,
$$\varepsilon(f,h) = \displaystyle \frac{-1}{2}\int_{a}^{b} f'(t)h'(t)dt+
  \int_{a}^{b}\frac{1}{2} \left[f''(t)h(t)+f'(t)h'(t) \right] W(t)dt.$$

Notice that $\varepsilon= \varepsilon_{1}+ \varepsilon_{2}$, where
$$\varepsilon_{1}(f,h):= -\displaystyle \int_{a}^{b}\frac{f'(t)h'(t)}{4}dt+
\int_{a}^{b}\frac{f''(t)h(t)W(t)}{4}dt+
\int_{a}^{b}\frac{f(t)h''(t)W(t)}{4}dt+
\int_{a}^{b}\frac{f'(t)h'(t)W(t)}{2}dt,$$ \noindent and
$$\varepsilon_{2}(f,h):=-\displaystyle
\int_{a}^{b}\frac{f'(t)h'(t)}{4}dt+
\displaystyle\frac{1}{4}\int_{a}^{b}f''(t)h(t)W(t)dt-
\frac{1}{4}\int_{a}^{b}f(t)h''(t)W(t)dt.$$

Let us see that $\varepsilon_{1}$ is symmetric lower semibounded and
closed bilinear form on $W^{2,2}.$ Take $f \in W^{2,2}$, and let
$M:=\max_{a\leq s \leq b} |W(s)|,$ then
\begin{eqnarray*}
  \varepsilon_{1}(f,f) &=& \displaystyle -\int_{a}^{b} \frac{1}{4}[f'(t)]^{2}dt+
  \int_{a}^{b}\frac{1}{2} \left[f''(t)f(t)+[f'(t)]^{2} \right] W(t)dt\\
   &\geq & \displaystyle -\int_{a}^{b} \frac{1}{4}[f'(t)]^{2}dt-
   \displaystyle\frac{M}{2} \left[
  \int_{a}^{b} |f''(t)f(t)| dt +\int_{a}^{b}[f'(t)]^{2}
  dt \right] \\
  &\geq & \displaystyle -\frac{1}{2}\int_{a}^{b} [f'(t)]^{2}dt-
   \displaystyle\frac{ M}{2} \left[
  \frac{1}{2}\int_{a}^{b} [f''(t)]^{2} dt + \frac{1}{2}\int_{a}^{b} [f(t)]^{2} dt
  +\int_{a}^{b}[f'(t)]^{2} dt \right] \\
  &\geq & C \left[\displaystyle \int_{a}^{b}[f(t)]^{2}dt
+\int_{a}^{b}[f'(t)]^{2}dt + \int_{a}^{b}[f''(t)]^{2}dt  \right],
\end{eqnarray*}
where $C$ is a constant depending $W$. Then we have that the
bilinear form $\varepsilon_{1}$ satisfies
$$\varepsilon_{1}(f,f)\geq C \|f \|^{2}_{2},$$
which concludes that $\varepsilon_{1}$ is a semibounded form on the
Sobolev space $W^{2,2}$.

Now we point out why $\varepsilon_{1}$ is closed. This is the case
because the norm $\| \cdot \|_{2}$, which makes $W^{2,2}$ complete,
is actually equivalent to the norm $\| \cdot \|_{\varepsilon_{1}}$,
as one can check it. This implies that $\varepsilon_{1}$ is a closed
form on $W^{2,2}$, and using the Corollary 10.8 from
\cite{Selfadjoint}, there exists an operator $L_{1}$ associated with
the bilinear form $\varepsilon_{1}$, that is, such that
$\varepsilon_{1}(f,g)= \langle L_{1}f, g\rangle_{2}$, where $\langle
\cdot, \cdot \rangle_{2}$ is the inner product associated with the
norm $\| \cdot \|_{2}$.

For $\varepsilon_{2}$ we apply the Lax-Milgram theorem. As in
Section \ref{SectSO}, one can see that $\varepsilon_{2}$ is bounded
and coercive. Then we obtain that there exists an operator $L_{2}$
such that $\varepsilon_{2}(f,h)=\langle L_{2}f,h\rangle_{2}$. Then
the bilinear form $\varepsilon$ is associated with the operator
$L_{1}+L_{2}$ using the inner product of $W^{2,2}.$


\subsection{The Green operator}
\noindent Our aim is to construct the so-called Green operator
associated to the weak random operator $L$ from the Definition
(\ref{definicion debil del operador sobre W22}). To do this task, we
notice that we need to find two linearly independent solutions of
the problem $Lf=0$.

\vspace{0.2cm}

\noindent It happens that the two linearly independent solutions
always exist; we will prove this fact later on. For the moment, let
us suppose that we already have the two solutions $u$ and $v$ of the
homogeneous equation. With these functions we are going to construct
an operator $T$, called the Green operator, which will be the
inverse operator of the weak random operator $L$.

\vspace{0.2cm}

\noindent The following theorem shows how to use the two solutions
of the homogeneous problem to construct $T$. We take the idea of
this constructions from the Sturm-Liouville theory.

\vspace{0.3cm}

\begin{theorem}
Let $u,v$ be solutions of $Lf=0$, such that $u(a)=0$ and $u(b)=1$
a.s., and $v(a)=1$ and $v(b)=0$ a.s. The stochastic Green operator
associated to the weak random operator $L$ is given by

\begin{equation} \label{operador de green}
(Tf)(t):=\displaystyle \int_{a}^{b}G(t,s)f(s)ds,
\end{equation}

\vspace{0.4cm}

\noindent where
\begin{center}
$G(t,s):=   \left\{%
\begin{array}{ll}
    \displaystyle\frac{2u(t)v(s)}{\alpha(s)}, & \hbox{a $\leq$ s $\leq$ t $\leq$ b;} \\
    \displaystyle\frac{2u(s)v(t)}{\alpha(s)}, & \hbox{a $\leq$ t $\leq$ s $\leq$ b.} \\
\end{array}%
\right.               $,
\end{center}
\vspace{0.2cm}

\noindent and $$\alpha(t):= u'(t)v(t)-v'(t)u(t).$$

\vspace{0.2cm}

\noindent The operator $T$ in (\ref{operador de green}) is the right
inverse of $L$ in the sense that for all $h \in H_{1}$
$$ \varepsilon (Tf,g) = \langle LTf, h  \rangle = \langle f, h  \rangle \; \; \mbox{almost
surely.} $$
\end{theorem}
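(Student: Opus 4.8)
The plan is to mimic the proof of the analogous theorem in Section 3.2, adapting it to the bilinear form in Definition \ref{definicion debil de un operador}. First I would write $(Tf)(t)$ explicitly by splitting the integral defining the Green kernel at $s=t$, obtaining
\[
(Tf)(t)= 2u(t)\int_{a}^{t}\frac{v(s)f(s)}{\alpha(s)}\,ds + 2v(t)\int_{t}^{b}\frac{u(s)f(s)}{\alpha(s)}\,ds.
\]
Differentiating and using that the two boundary terms coming from the Leibniz rule cancel (since they combine to $2\,[u(t)v(t)-v(t)u(t)]f(t)/\alpha(t)=0$), I get
\[
(Tf)'(t)= 2u'(t)\int_{a}^{t}\frac{v(s)f(s)}{\alpha(s)}\,ds + 2v'(t)\int_{t}^{b}\frac{u(s)f(s)}{\alpha(s)}\,ds.
\]

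Next I would substitute this expression for $(Tf)'$ into the bilinear form $\varepsilon(Tf,h)= -\tfrac12\int_a^b (Tf)'(t)h'(t)\,dt - \tfrac12\int_a^b (Tf)'(t)h(t)\,dW(t)$, which produces four double-integral terms. The key algebraic move, exactly as in the Section 3.2 proof, is to add and subtract the terms obtained by applying the product rule inside each integral — here the relevant quantities are $\int_a^b \tfrac12 u'(t)v(t)f(t)h(t)/\alpha(t)\,dt$, the analogous one with $u\leftrightarrow v$, and the corresponding Itô correction term. After regrouping, the $dt$-integrals and $dW$-integrals involving $u$ (respectively $v$) will reassemble into $\langle Lu, h_1\rangle$ (respectively $\langle Lv, h_2\rangle$) for suitable test functions $h_1(t)=h(t)\int_a^t v(s)f(s)/\alpha(s)\,ds$ and $h_2(t)=h(t)\int_t^b u(s)f(s)/\alpha(s)\,ds$, both of which lie in $H_1$; since $Lu=0$ and $Lv=0$ weakly, those pieces vanish. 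What remains is a single integral whose integrand, after invoking the definition $\alpha=u'v-v'u$, collapses to $f(t)h(t)$, giving $\varepsilon(Tf,h)=\langle f,h\rangle$ almost surely.

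One technical point I would be careful about is that the application of the Itô formula in passing between $\int f'(t)h(t)\,dW(t)$ and the $dt$-integrals requires the integrands to be smooth enough; here $h_1,h_2$ are absolutely continuous but their derivatives involve $f$, so I would either restrict first to $f$ in a dense subclass (e.g. continuous $f$) and then extend by continuity of $T$ and of the bilinear form — using condition (2) of Definition \ref{WeakOperators} — or simply note that the computation is formally identical to the one already carried out for the first operator and the same justification applies. The main obstacle I anticipate is bookkeeping: keeping track of the eight-or-so terms produced by expanding $\varepsilon(Tf,h)$ and verifying that the add-and-subtract terms line up correctly so that exactly the combinations $\langle Lu,\cdot\rangle$ and $\langle Lv,\cdot\rangle$ appear, with the leftover reducing cleanly via the Wronskian $\alpha$. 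Once the grouping is set up correctly, each individual cancellation is routine, but the organization of the terms is where an error is most likely to creep in.
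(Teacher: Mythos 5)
Your proposal follows essentially the same route as the paper's proof: expand $(Tf)$ and its derivative, insert into the bilinear form, add and subtract the Wronskian-type terms so the integrals reassemble as $\langle Lu,h_1\rangle$ and $\langle Lv,h_2\rangle$ with $h_1(t)=h(t)\int_a^t v(s)f(s)/\alpha(s)\,ds$, $h_2(t)=h(t)\int_t^b u(s)f(s)/\alpha(s)\,ds \in H_1$, and let the leftover collapse to $\langle f,h\rangle$ via $\alpha=u'v-v'u$. This matches the paper's argument (the only cosmetic difference being that for this operator no extra It\^o correction term is actually needed in the add-and-subtract step, since the $dW$ integrals already carry the factor $h(t)$), so the proposal is correct.
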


\begin{proof}
\noindent Let $u,v$ be solutions of $Lf=0$, such that $u(a)=0$ and
$u(b)=1$ always, and that $v(a)=1$ and $v(b)=0$ always as well.

\vspace{0.2cm}

\noindent Note that
\begin{equation} \label{operador de green desarrollado}
(Tf)(t)= 2u(t)\displaystyle\int_{a}^{t}\frac{v(s)f(s)}{\alpha(s)}ds
+ 2v(t)\displaystyle\int_{t}^{b}\frac{u(s)f(s)}{\alpha(s)}ds.
\end{equation}

\vspace{0.2cm}

\noindent On calculating the derivative of (\ref{operador de green
desarrollado}) we obtain

\begin{equation} \label{derivada de Tf}
\frac{ d [(Tf)(t)] }{dt}= 2u'(t) \displaystyle\int_{a}^{t}
\frac{v(s)f(s)}{\alpha(s)}ds +\frac{2u(t)v(t)f(t)}{\alpha(t)}+
2v'(t)\displaystyle\int_{t}^{b}
\frac{u(s)f(s)}{\alpha(s)}ds-\frac{2u(t)v(t)f(t)}{\alpha(t)}.
\end{equation}

\vspace{0.2cm}

\noindent Note that in the above expression the first and last term
are canceled. Now, by using Definition \ref{definicion debil de un
operador}

\begin{equation} \label{composicion LT}
\langle L(Tf),h  \rangle = \frac{-1}{2} \left[
\displaystyle\int_{a}^{b} (Tf(t))'h'(t)dt + \int_{a}^{b}
(Tf(t))'h(t)dW(t) \right].
\end{equation}

\vspace{0.2cm}

\noindent Inserting (\ref{derivada de Tf}) into (\ref{composicion
LT}) we arrive at

\begin{eqnarray} \label{ecuacion grande}
 \langle L(Tf),h  \rangle    &   =&  \nonumber -\displaystyle\int_{a}^{b}u'(t)
 \left[\int_{a}^{t}\frac{v(s)f(s)}{\alpha(s)}ds
\right] h'(t)dt - \displaystyle\int_{a}^{b}u'(t)
\left[\int_{a}^{t}\frac{v(s)f(s)}{\alpha(s)}ds \right] h(t)dW(t) \\
 &-& \displaystyle\int_{a}^{b}v'(t) \left[\int_{t}^{b}\frac{u(s)f(s)}{\alpha(s)}ds \right]
h'(t)dt- \displaystyle\int_{a}^{b}v'(t)
\left[\int_{t}^{b}\frac{u(s)f(s)}{\alpha(s)}ds \right] h(t)dW(t).
\end{eqnarray}

\vspace{0.2cm}

\noindent Therefore, if we add and subtract in (\ref{ecuacion
grande}) the following two terms

$$\displaystyle\int_{a}^{b}\frac{u'(t)v(t)f(t)h(t)}{\alpha(t)}dt, \; \; \mbox{and} \;
\; \displaystyle\int_{a}^{b}\frac{u(t)v'(t)f(t)h(t)}{\alpha(t)}dt,$$
\vspace{0.2cm}

\noindent and we use the fact that
\begin{equation}
\left[  h(t) \int_{a}^{t}\frac{v(s)f(s))}{\alpha(s)}ds \right]'
=h'(t)\int_{a}^{t}\frac{v(s)f(s)}{\alpha(s)}ds+
h(t)\frac{v(t)f(t)}{\alpha(t)},
\end{equation}

\vspace{0.2cm}

\noindent we arrive at

\begin{eqnarray} \label{ecuacion donde usamos que Lu=0 y Lv=0}
   \langle L(Tf),h  \rangle  \nonumber &=&
  -\displaystyle\int_{a}^{b}u'(t) \left[h(t)\int_{a}^{t}\frac{v(s)f(s)}{\alpha(s)}ds
\right]' dt- \displaystyle\int_{a}^{b}u'(t)
\left[h(t)\int_{a}^{t}\frac{v(s)f(s)}{\alpha(s)}ds \right] dW(t)\\
   &-& \displaystyle\int_{a}^{b}v'(t) \left[h(t)\int_{t}^{b} \frac{u(s)f(s)}{\alpha(s)}ds
\right]' dt- \displaystyle\int_{a}^{b}v'(t)
\left[h(t)\int_{t}^{b} \frac{u(s)f(s)}{\alpha(s)}ds \right] dW(t)\\
   &+&
   \int_{a}^{b} \frac{u'(t)v(t)f(t)h(t)}{\alpha(t)}dt -
   \int_{a}^{b} \frac{v'(t)u(t)f(t)h(t)}{\alpha(t)}dt.
   \nonumber
\end{eqnarray}

\vspace{0.2cm}

\noindent Now, using the fact that $u$ and $v$ are solutions of
$Lf=0$ in the sense of Definition \ref{definicion solucion de la
ecuacion homogenea}, we see that only the last two terms in
(\ref{ecuacion donde usamos que Lu=0 y Lv=0}) survive. Thus we
finally arrive at
\begin{eqnarray*}
 \langle L(Tf),h  \rangle  &=& \int_{a}^{b} \frac{u'(t)v(t)f(t)h(t)}{\alpha(t)}dt-
  \int_{a}^{b} \frac{v'(t)u(t)f(t)h(t)}{\alpha(t)}dt \\
   &=& \int_{a}^{b} \left[\frac{u'(t)v(t)-u(t)v'(t)}{\alpha(t)}  \right] f(t)h(t)dt\\
   &=& \langle f,h  \rangle,
\end{eqnarray*}

\vspace{0.2cm}

\noindent where we have substitute the very definition of $\alpha$.
This concludes the proof.
\end{proof}

As previous section, since $T$ is compact, we have that
\begin{corollary}
The operator $L$ has a discrete spectrum in a weak sense.
\end{corollary}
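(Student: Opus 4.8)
The plan is to repeat, for this second operator, the reasoning that led to the corollary at the end of Section 3. The argument rests on three facts: (i) the Green operator $T$ built in the preceding theorem is almost surely compact on $L_2[a,b]$; (ii) a compact operator on a Hilbert space has a discrete spectrum, that is, an at most countable family of eigenvalues, each of finite multiplicity, whose only possible accumulation point is $0$; and (iii) the identity $\langle LTf,h\rangle=\langle f,h\rangle$, valid for all $h\in H_1$, converts the eigenvalue relation for $T$ into the desired weak eigenvalue relation for $L$.

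First I would prove that $T$ is almost surely compact. The kernel is $G(t,s)=2u(t)v(s)/\alpha(s)$ for $s\le t$ and $G(t,s)=2u(s)v(t)/\alpha(s)$ for $t\le s$, with $\alpha(t)=u'(t)v(t)-v'(t)u(t)$. Since $u$ and $v$ are the explicitly known solutions of the homogeneous equation, they are almost surely continuous on the compact interval $[a,b]$; and from their explicit form one checks that $\alpha$ is almost surely continuous and does not vanish on $[a,b]$ (indeed $\alpha$ is, up to a nonzero multiplicative constant, an explicit continuous functional of the Brownian path that stays bounded away from $0$). Consequently $G$ is continuous, hence bounded, on $[a,b]\times[a,b]$ almost surely, so $G\in L_2([a,b]\times[a,b])$ almost surely and $T$ is almost surely a Hilbert--Schmidt operator on $L_2[a,b]$, in particular compact. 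This verification of the size of the kernel, in particular the non-vanishing of $\alpha$ and the integrability of $1/\alpha$, is the step I expect to require the most care; once the explicit solutions of $Lf=0$ promised later in the section are available it is a routine estimate.

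Next I would apply the spectral theorem for compact operators: almost surely there is an at most countable family of eigenpairs $Te_n=\mu_n e_n$ with $e_n\in L_2[a,b]$, the $\mu_n$ of finite multiplicity and accumulating only at $0$. Moreover $T$ is injective, since if $Te=0$ then by the preceding theorem $\langle e,h\rangle=\langle LTe,h\rangle=0$ for all $h\in H_1$, forcing $e=0$; hence every $\mu_n$ is nonzero, and each $e_n=\mu_n^{-1}Te_n$ lies in the range of $T$, hence in $H_1$ as in the corollary of Section 3.

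Finally I would transfer the relation to $L$. Fixing an eigenpair $Te_n=\mu_n e_n$ with $\mu_n\neq0$ and inserting $f=e_n$ in the identity of the preceding theorem gives, for every $h\in H_1$,
$$\langle e_n,h\rangle=\langle LTe_n,h\rangle=\langle L(\mu_n e_n),h\rangle=\mu_n\langle Le_n,h\rangle,$$
so that $\langle Le_n,h\rangle=\langle\lambda_n e_n,h\rangle$ with $\lambda_n:=\mu_n^{-1}$. Since $\{\mu_n\}$ is at most countable, so is $\{\lambda_n\}$, and this is exactly the assertion that $L$ has a discrete spectrum in the weak sense: for all $h\in H_1$ the relation $\langle Le,h\rangle=\langle\lambda e,h\rangle$ holds for countably many $\lambda$ and $e\in H_1$. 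This completes the plan.
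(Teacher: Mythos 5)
Your proposal is correct and follows essentially the paper's own route: the paper likewise argues that the Green operator $T$ is almost surely compact and then uses the identity $\langle LTf,h\rangle=\langle f,h\rangle$ to turn eigenpairs $Te=\mu e$ into weak eigenpairs of $L$ with $\lambda=1/\mu$; your Hilbert--Schmidt bound on the kernel (using that $\alpha(t)=e^{W(t)-t/2}\big/\int_a^b e^{W(s)-s/2}ds$ is continuous and bounded away from $0$), the injectivity of $T$, and the explicit transfer computation merely supply details the paper leaves implicit. The one parenthetical claim to treat with care is that the range of $T$ lies in $H_1$: with the kernel as written, $(Tf)(a)=2\int_a^b u(s)f(s)/\alpha(s)\,ds$ need not vanish, but since the corollary only asserts the weak eigenvalue relation for countably many $\lambda$, this does not affect your conclusion.
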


\noindent Now, in order to use previous theorem, we need to find the
two solutions of $Lf=0$. We do so by using approximations of
Brownian motion.

\vspace{0.2cm}

\noindent First, to obtain intuitively such so solutions we consider
the followings approximations of $W$,

$$W_{n}(t):= n \left[ \left(\frac{j+1}{n}-t   \right)W\left(\frac{j}{n}  \right)
+\left( t- \frac{j}{n}  \right) W \left(\frac{j+1}{n}   \right)
\right]$$

\vspace{0.2cm}

\noindent where $t \in \left[\frac{j}{n}, \frac{j+1}{n}   \right]$,
and $j= 0, \pm 1, \pm2...$ Therefore, the random function $W_{n}$ is
almost everywhere differentiable.

\vspace{0.2cm}

\noindent Then the following equation is valid for almost every $t
\in [a,b]$

$$U_{n}''(t)=W_{n}'(t)U_{n}'(t).$$

\vspace{0.2cm}

\noindent We want to use $U_{n}(t)$ to find heuristically a solution
of

$$U''(t)=W'(t)U'(t).$$

\vspace{0.2cm}

\noindent We consider the change of variable $Z_{n}(t):=U_{n}'(t)$.
Then we obtain the new equation

\begin{equation} \label{ecuacion de aproximaciones del browniano}
Z_{n}'(t)=W_{n}'(t)Z_{n}(t).
\end{equation}

\vspace{0.2cm}

\noindent From the Corollary of Theorem 7.3 of \cite{ikedawatanabe},
we have that there exist a sequence $Z_{n}(t)$ of solutions of
(\ref{ecuacion de aproximaciones del browniano}) such that, with
probability one

\begin{equation}
Z_{n}(t) \rightarrow Z(t), \; \; \; \mbox{as} \; \; n  \rightarrow
\infty,
\end{equation}

\vspace{0.2cm}

\noindent where $Z(t)$ is solution of the stochastic differential
equation

\begin{equation} \label{ecuacion diferencial estocastica con Z}
dZ(t)=Z(t)dW(t).
\end{equation}

\vspace{0.2cm}

\noindent Then we obtain that with probability one

\begin{equation}
U_{n}'(t) \rightarrow Z(t), \; \; \; \mbox{as} \; \; n  \rightarrow
\infty.
\end{equation}

\vspace{0.2cm}

\noindent On the other hand, the equation (\ref{ecuacion diferencial
estocastica con Z}) has unique solution, and this solution is

\begin{equation}
Z(t)=e^{W(t)-\frac{t}{2}}.
\end{equation}

\vspace{0.2cm}

\noindent Hence

\begin{equation}
U_{n}'(t) \rightarrow e^{W(t)-\frac{t}{2}}, \; \; \; \mbox{as} \; \;
n  \rightarrow \infty.
\end{equation}

\vspace{0.2cm}

\noindent This implies that

\begin{equation}
U_{n}(t) \rightarrow \int_{a}^{t}e^{W(s)-\frac{s}{2}}ds, \; \; \;
\mbox{as} \; \; n  \rightarrow \infty.
\end{equation}

\vspace{0.2cm}

\noindent In the following theorem we verify rigourously that
$u(t):=C \cdot \int_{a}^{t}e^{W(s)-\frac{s}{2}}ds$ satisfies $Lu=0$,
where $C$ is an appropriate constant. We also consider other
solution
 $v$ that we need to construct the Green operator.

\begin{theorem} \label{teorema soluciones u y v}
Two linearly independent solutions of the problem $Lf=0$ are the
following integrals of Geometric Brownian motion
\begin{equation} \label{definicion de u}
u(t):=\frac{\displaystyle\int_{a}^{t}e^{W(s)-\frac{s}{2}}ds}
{\displaystyle\int_{a}^{b}e^{W(s)-\frac{s}{2}}ds}.
\end{equation}
\begin{equation} \label{definicion de v}
v(t):=\frac{\displaystyle\int_{t}^{b}e^{W(s)-\frac{s}{2}}ds}
{\displaystyle\int_{a}^{b}e^{W(s)-\frac{s}{2}}ds}.
\end{equation}
\vspace{0.2cm}

\noindent Furthermore, they satisfy $u(a)=0$, $u(b)=1$, $v(a)=1$ and
$v(b)=0$.
\end{theorem}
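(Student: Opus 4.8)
The plan is to verify directly, from Definition~\ref{definicion debil de un operador}, that the functions $u$ and $v$ in (\ref{definicion de u}) and (\ref{definicion de v}) satisfy $\varepsilon(u,h)=0$ and $\varepsilon(v,h)=0$ for every $h\in H_1$, and then to check the four boundary values and linear independence. Write $I(t):=\int_a^t e^{W(s)-s/2}\,ds$ and $N:=I(b)=\int_a^b e^{W(s)-s/2}\,ds$, so that $u=I/N$ and $v=(N-I)/N$. Note first that both $u$ and $v$ are absolutely continuous with $u'(t)=e^{W(t)-t/2}/N$ and $v'(t)=-e^{W(t)-t/2}/N$; since these derivatives are continuous in $t$ (for a fixed Brownian path) they lie in $L_2[a,b]$, and the boundary values $u(a)=0$, $u(b)=I(b)/N=1$, $v(a)=1$, $v(b)=0$ are immediate. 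Linear independence is clear because $u$ and $v$ have opposite behaviour at the endpoints (equivalently, the Wronskian-type quantity $\alpha=u'v-v'u=e^{W(t)-t/2}/N$ is almost surely nonzero).

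The core computation is to show $\varepsilon(u,h)=0$. By definition,
\[
\varepsilon(u,h)=-\frac12\int_a^b u'(t)h'(t)\,dt-\frac12\int_a^b u'(t)h(t)\,dW(t).
\]
Since $u'(t)=e^{W(t)-t/2}/N$, the plan is to recognize $e^{W(t)-t/2}$ as the geometric Brownian motion solving $dZ(t)=Z(t)\,dW(t)$, i.e. the stochastic differential $d\bigl(e^{W(t)-t/2}\bigr)=e^{W(t)-t/2}\,dW(t)$, which is exactly the heuristic computation $U''=W'U'$ made rigorous. Applying the It\^o product rule to the process $t\mapsto h(t)e^{W(t)-t/2}$ (where $h$ is absolutely continuous, hence of bounded variation, so no extra quadratic-covariation term appears) gives
\[
d\bigl(h(t)e^{W(t)-t/2}\bigr)=h'(t)e^{W(t)-t/2}\,dt+h(t)e^{W(t)-t/2}\,dW(t).
\]
Integrating from $a$ to $b$ and using $h(a)=h(b)=0$, the left-hand side vanishes, so $\int_a^b h'(t)e^{W(t)-t/2}\,dt=-\int_a^b h(t)e^{W(t)-t/2}\,dW(t)$. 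Dividing by $N$, this says precisely $\int_a^b u'(t)h'(t)\,dt=-\int_a^b u'(t)h(t)\,dW(t)$, hence $\varepsilon(u,h)=-\tfrac12\int u'h'\,dt-\tfrac12\int u'h\,dW = \tfrac12\int u'h\,dW-\tfrac12\int u'h\,dW=0$ almost surely. For $v$ the argument is identical since $v'=-u'$; thus $\varepsilon(v,h)=-\varepsilon(u,h)=0$.

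The main obstacle is the justification of applying It\^o's product/integration-by-parts formula to $h(t)e^{W(t)-t/2}$ when $h$ is merely absolutely continuous rather than $C^1$. This is handled by noting that an absolutely continuous $h$ with $h'\in L_2[a,b]$ is in particular of finite variation and continuous, so the standard integration-by-parts formula for semimartingales applies with $\langle h,\,e^{W-\cdot/2}\rangle\equiv 0$; alternatively one approximates $h$ by smooth functions $h_n\to h$ in $H_1$ and passes to the limit, using continuity in probability of the bilinear form (property~2 of Definition~\ref{WeakOperators}) on one side and the $L_2$-isometry of the It\^o integral on the other. A secondary point to state carefully is that the ``almost surely'' in $\varepsilon(u,h)=0$ can be taken uniformly in $h$, i.e. there is a single null set off which the identity holds for all $h\in H_1$ simultaneously; this follows because the identity $\int_a^b h'(t)e^{W(t)-t/2}\,dt+\int_a^b h(t)e^{W(t)-t/2}\,dW(t)=0$ holds pathwise once $e^{W(t)-t/2}$ is realized as a continuous semimartingale on a full-probability event independent of $h$.
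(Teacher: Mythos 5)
Your proposal is correct and follows essentially the same route as the paper: compute $u'(t)=e^{W(t)-t/2}/\int_a^b e^{W(s)-s/2}ds$, apply It\^o's formula (product rule) to $h(t)e^{W(t)-t/2}$ to convert $\int_a^b e^{W(t)-t/2}h(t)\,dW(t)$ into $-\int_a^b e^{W(t)-t/2}h'(t)\,dt$ using $h(a)=h(b)=0$, and conclude that the two terms in $\varepsilon(u,h)$ cancel, with $v$ handled by $v'=-u'$. Your extra remarks on the boundary values, linear independence, the finite-variation justification of the product rule, and the choice of a single null set are sound additions, but the argument is the paper's own.
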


\vspace{0.4cm}

\begin{proof}
\noindent We verify that $u$ is solution of $Lf=0$. For $v$ is
similar. To do that, according to Definition \ref{definicion debil
de un operador} we want to show that $\langle Lu, h \rangle =0$ for
all $h \in H_{1}$, i.e.
\begin{equation} \label{ecuacion con u igual a cero}
\displaystyle\int_{a}^{b}\frac{u'(t)h'(t)}{2}dt +
\int_{a}^{b}\frac{u'(t)h(t)}{2}dW(t)=0.
\end{equation}

\vspace{0.2cm}

\noindent From the definition of $u$ in (\ref{definicion de u}), we
have
\begin{equation} \label{derivada de u}
u'(t)=\frac{e^{W(t)-\frac{t}{2}}}
{\displaystyle\int_{a}^{b}e^{W(s)-\frac{s}{2}}ds}.
\end{equation}

\vspace{0.2cm}

\noindent Let $\beta:= \left[
\displaystyle\int_{a}^{b}e^{W(s)-\frac{s}{2}}ds \right]^{-1}$, then

\begin{equation} \label{formula producto interno}
\langle Lu, h    \rangle = \frac{-\beta}{2}
 \left[
\displaystyle\int_{a}^{b}e^{W(t)-\frac{t}{2}}h'(t)dt +
\displaystyle\int_{a}^{b}e^{W(t)-\frac{t}{2}}h(t)dW(t)
 \right].
\end{equation}

\vspace{0.2cm}

\noindent On the other hand, applying the It$\hat{\mbox{o}}$'s
formula we obtain
\begin{equation} \label{aplicando formula de ito}
\displaystyle\int_{a}^{b}e^{W(s)-\frac{s}{2}}h(s)dW(s)=
h(b)e^{W(b)-\frac{b}{2}}-h(a)e^{W(a)-\frac{a}{2}} -
\displaystyle\int_{a}^{b}e^{W(s)-\frac{s}{2}}h'(s)ds.
\end{equation}

\vspace{0.2cm}

\noindent Substituting (\ref{aplicando formula de ito}) in
(\ref{formula producto interno}), and recalling that $h \in H_{1}$,
we arrive at

\begin{equation}
\langle Lu, h    \rangle = \frac{-\beta}{2}
 \left[
\displaystyle\int_{a}^{b}e^{W(t)-\frac{t}{2}}h'(t)dt +
h(b)e^{W(b)-\frac{b}{2}}-h(a)e^{W(a)-\frac{a}{2}} -
\displaystyle\int_{a}^{b}e^{W(t)-\frac{t}{2}}h'(t)dt
 \right]=0.
\end{equation}
\end{proof}

\textbf{Acknowledgements}. We thank all the corrections and comments
from both referees, it helped us tremendously to improve our paper.



\begin{thebibliography}{9}

\bibitem{Brox}  TH. Brox (1986). A one-dimensional diffusion process in a Wiener medium.
\textit{The Annals of Probability} \textbf{17}(4), pp. 1206--1218.

\bibitem{Carmona} R. Carmona and J. Lacroix (1990).
\emph{Spectral Theory of Random Schr$\ddot{o}$dinger Operators}.
Birkh$\ddot{\mbox{a}}$user.

\bibitem{Chenay} W. Cheney (2001).
\emph{Analysis for Applied Mathematics}. Springer.

\bibitem{Fuku}  M. Fukushima and S. Nakao (1977). On spectra of the Schr\"{o}dinger operator with a white Gaussian noise potential.
\textit{Zeitshrift f\"{u}r Wahrsheinlichkeitstheorie} \textbf{37},
pp. 267--274.


\bibitem{GPP} J.J. Gutierrez-Pavon and C.G. Pacheco (2017).
\emph{The killed Brox diffusion}. Submitted.

\bibitem{Halperin}  B.I. Halperin (1965). Green's functions for a particle in a one-dimensional random potential.
\textit{Physical review} \textbf{19}(1A), pp. 104--117.

\bibitem{ikedawatanabe} N. Ikeda and S. Watanabe (1981).
\emph{Stochastic Differential Equations and Diffusion Processes}.
North-Holland Mathematical Library.

\bibitem{Karatzas} I. Karatzas and S.E. Shreve (1991).
\emph{Brownian Motion and Stochastic Calculus}. Springer-Verlag.

\bibitem{Karlin taylor} S. Karlin and H.M. Taylor (1981).
\emph{A Second Course in Stochastic Processes}. Academic Press

\bibitem{Papanicolaou} G.C. Papanicolaou and S.R.S. Varadhan (1982).
\emph{Diffusions with Random Coefficients}. North-Holland Publishing
Company.

\bibitem{Pacheco} C.G. Pacheco (2016).
\emph{Green Kernel for a Random Schrodinger Operators}.
Communications in Contemporary Mathematics 18, no. 5.

\bibitem{Ramirez}  J. Ram\'{i}rez and B. Rider (2009). Diffusion at the random matrix hard edge.
\textit{Communications in Mathematical Physics} \textbf{288}(4), pp.
887--906.

\bibitem{Selfadjoint}  K. Schm$\ddot{\mbox{u}}$dgen (2012). \textit{Unbounded Self-adjoint Operators on Hilbert Space}, Springer.

\bibitem{Skorohod}  A.V. Skorohod (1984). \textit{Random Linear Operators},
D. Reidel Publishing Company.


\end{thebibliography}
\end{document}